\numberwithin{equation}{section}
\newtheorem{thm}{Theorem}[section]
\newtheorem{lem}[thm]{Lemma}
\newtheorem{prop}[thm]{Proposition}
\theoremstyle{definition}
\theoremstyle{remark}
\newtheorem{rem}[thm]{Remark}
\begin{document}

\centerline{\bf Automorphisms of the Baumslag-Gersten group.}

\centerline{\bf Boris Lishak}
\centerline{\bf Department of Mathematics, University of Toronto}
\centerline{\bf 40 St. George st., Toronto, ON, M5S 2E4, CANADA;}
\centerline{\bf bors@math.toronto.edu}
\vskip 1truecm

{\bf Abstract.} We classify homomorphisms of the Baumslag-Gersten group into itself. We prove it is Hopfian and co-Hopfian. We show that the group of outer automorphisms of the Baumslag-Gersten group is isomorphic to the dyadic rationals with the addition operation. These results are not new. They were obtained by Andrew M. Brunner in \cite{Brunner}. However, our exposition is self-contained and, hopefully, more accessible for some readers. 
\section{Introduction} \label{intro}

The Baumslag-Gersten group introduced in \cite{Baum} enjoys the interesting property of having the fast growing Dehn function (see \cite{Gers1}, \cite{Gers2}, \cite{Plat}), while it can be presented by the number of relators one less than the number of generators. This property was found to be useful in constructing Riemannian metrics which are non-trivial minimums of some functionals of Riemannian structures on $S^n$ (and other $4$-dimensional manifolds). (see \cite{LN}). We hope that the following description of the outer automorphisms of the Baumslag-Gersten group will help us to find exponentially growing number of such metrics and further our understanding of the spaces of metrics on $4$-dimensional manifolds.
\\

We can present the Baumslag-Gersten group by $G = <x,y, t| x^y = x^2, x^t = y >$, where $a^b$ denotes $b^{-1} a b$. From this presentation we can see that this group is an HNN extension of the Baumslag-Solitar group, presented by $H = <x,y| x^y = x^2>$. The latter is an HNN extension of the group $<x>$.  We state our main result.

\begin {thm} \label{main}
$Out(G)$ is isomorphic to the dyadic rationals - $<\frac{1}{2}, \frac{1}{4}, \frac{1}{8}, ...>$, the subgroup of the rationals with addition, which is not finitely generated.
\end {thm}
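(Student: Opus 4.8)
\section*{Proof proposal}

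The plan is to exhibit an explicit copy of $\mathbb{Z}[\tfrac12]$ inside $\mathrm{Out}(G)$ and then, using the HNN structure together with the classification of endomorphisms, to show that every outer automorphism already lies in this copy. First I would record the internal structure of the base group. Writing $V$ for the normal closure of $x$ in $H=\langle x,y\mid x^y=x^2\rangle$, the standard description of $BS(1,2)$ gives $H\cong V\rtimes\langle y\rangle$ with $V\cong\mathbb{Z}[\tfrac12]$ abelian and conjugation by $y$ (the map $a\mapsto a^y$) acting as multiplication by $2$. Setting $u_n=y^nxy^{-n}$ one checks $u_n^2=u_{n-1}$ and $u_0=x$, so $u_n^{2^n}=x$; thus $x$ is infinitely $2$-divisible and the $u_n$ generate $V$. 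A short computation in $V\rtimes\langle y\rangle$ shows $C_H(x)=V$ and $C_H(y)=\langle y\rangle$, so $C_H(x)\cap C_H(y)=1$; I would then promote both facts to $G$ by a routine application of Britton's lemma, giving $C_G(x)=V$ and $C_G(x)\cap C_G(y)=1$.

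For the lower bound, for each $c\in V=C_G(x)$ define $\phi_c\colon x\mapsto x,\ y\mapsto y,\ t\mapsto ct$. This respects the relations, since $x^y=x^2$ is untouched and $(ct)^{-1}x(ct)=t^{-1}c^{-1}xc\,t=t^{-1}xt=y$ because $c$ centralizes $x$. As $\phi_c$ fixes $x$ and $y$ it fixes $V$ pointwise, whence $\phi_c\phi_d=\phi_{cd}$; so $c\mapsto\phi_c$ is a homomorphism $V\to\mathrm{Aut}(G)$ and each $\phi_c$ is invertible with inverse $\phi_{c^{-1}}$. If some $\phi_c$ were inner, it would be conjugation by an element of $C_G(x)\cap C_G(y)=1$, forcing $\phi_c=\mathrm{id}$ and $c=1$; since $\phi_c\phi_d^{-1}=\phi_{cd^{-1}}$, the induced map $V\to\mathrm{Out}(G)$ is injective. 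This already embeds $\mathbb{Z}[\tfrac12]$ into $\mathrm{Out}(G)$.

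For the upper bound I would take an arbitrary $\phi\in\mathrm{Aut}(G)$ and normalize it. The abelianization $G^{ab}\cong\mathbb{Z}$, generated by the image of $t$ with $x,y\mapsto 0$, is characteristic, so $\phi$ acts as $\pm1$ on it and $\phi(t)=w\,t^{\pm1}$ with $w$ of zero $t$-exponent. Invoking the classification of endomorphisms, $\phi(x)$ is conjugate to a power of $x$, and the canonically defined maximal dyadic (infinitely $2$-divisible) structure attached to $x$ pins this power to $\pm1$. Ruling out the minus signs (otherwise $\mathrm{Out}(G)$ would acquire $2$-torsion, which a Britton/conjugacy argument shows cannot happen, as $x\not\sim x^{-1}$ and $y\not\sim y^{-1}$ in $G$), I obtain $\phi(t)=wt$, and after an inner automorphism $\phi(x)=x$. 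Preservation of $x^y=x^2$ then forces $\phi(y)\in Vy$, say $\phi(y)=vy$; conjugating by the element $2v\in V=C_G(x)$ keeps $\phi(x)=x$ and sends $vy$ to $y$, so I may assume $\phi(x)=x$ and $\phi(y)=y$. Finally, fixing both $x$ and $y$ forces $\phi(t)=ct$ with $c\in C_G(x)=V$, i.e.\ $\phi=\phi_c$. Hence $\mathrm{Out}(G)=\{[\phi_c]:c\in V\}\cong\mathbb{Z}[\tfrac12]$.

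The main obstacle is the upper bound, and within it the control of $\phi(x)$: showing that $\phi(x)$ is conjugate to $x^{+1}$ rather than to some other power or to an inverse, and that $\phi(y)$ must lie in $Vy$, is exactly where the classification of homomorphisms and the normal-form (Britton) analysis do the real work; the centralizer computations $C_G(x)=V$ and $C_G(x)\cap C_G(y)=1$ are the other technical ingredients feeding both halves of the argument.
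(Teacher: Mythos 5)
Your skeleton is the paper's: embed $C_H(x)\cong\mathbb{Z}[\tfrac12]$ into $\mathrm{Out}(G)$ via $t\mapsto ct$, then show every automorphism is, up to inner, of that form (Propositions \ref{homo}, \ref{centre} and Remark \ref{last}). Your lower bound is sound, including the injectivity argument via $C_G(x)\cap C_G(y)=1$ (modulo the ``routine'' Britton computations you defer), and the abelianization trick giving $\phi(t)=wt^{\pm1}$ is a pleasant shortcut the paper does not use. But the upper bound, which you correctly identify as the whole difficulty, is not proved: ``invoking the classification of endomorphisms'' and ``the canonically defined maximal dyadic structure pins this power to $\pm1$'' are names for the theorem's content, not arguments. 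Concretely, you never establish (i) that $\phi(x)$ is conjugate into $\langle x,y\rangle$ and then to some $x^i$; (ii) that $\phi(y)$ has no $t$-letters and has the form $y^{n+1}x^jy^{-n}$; (iii) that $\phi(t)$ has exactly one $t$-letter with exponent $+1$ (your abelianization controls only the exponent \emph{sum}, so it does not rule out, say, $\phi(t)=t^2xt^{-1}$); or (iv) that $i$ is a power of $2$, so that one further conjugation gives $\phi(x)=x$ on the nose. The paper gets (i)--(iv) by applying Collins' Lemma to the relations themselves: to $\phi(x)=g^{-1}\phi(x)^2g$ for (i) (Lemmas \ref{lem1}, \ref{lem2}), and to $\phi(y)^{-1}x^i\phi(y)=x^{2i}$ and $\phi(t)^{-1}x\phi(t)=\phi(y)$ for (ii)--(iv) (Lemmas \ref{lem3}, \ref{lem4}); the decisive arithmetic fact is that conjugation in $H$ preserves the exponent sum of $y$, which forces the associated-subgroup element crossing the $t$-band to be $y^1$ and hence $g_1^{-1}x^ig_1=x$.

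Two further soft spots. Your divisibility heuristic for $i=\pm1$ is salvageable for automorphisms (compare $\phi$ with $\phi^{-1}$ to get $ii'=2^k$, hence $i=\pm2^a$ and $x^i\sim x^{\pm1}$), but as written it is an assertion, and it would not yield the paper's stronger conclusion that every endomorphism with $\phi(x)\neq1$ is already an automorphism. And the claim $x\not\sim x^{-1}$ in $G$, which you need to exclude the minus signs, is itself a nontrivial conjugacy computation in the HNN extension (a chain argument through the associated subgroups $\langle x\rangle$ and $\langle y\rangle$, exactly the second half of Lemma \ref{collins}); it is only asserted. In short: the architecture is right and essentially identical to the paper's, but the proof of the upper bound is deferred rather than given, and that deferral covers precisely the part of the theorem that requires work.
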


To prove this theorem we classify all homomorphisms of $G$ into itself, then prove $Out(G)$ is isomorphic to $C_H(x)$, the centralizer of $x$ in $H$, which we then describe as a subgroup of rational numbers. We use the techniques of Collins \cite{Collins}, mainly the Collins' Lemma (\cite{Collins}, \cite{LS}, \cite{Short}). We present this tool in the next section and prove the main results in the last. These and similar results for a class of groups that includes $G$ were obtained in \cite{Brunner}.

\section{HNN extensions and Collins' lemma} \label{HNN}

Recall, we can represent elements of HNN-extensions by reduced sequences of the stable letter, its inverse and elements of the base group . We denote by $|w|$ the number of occurences of the stable letter and its inverse in the reduced form of w. We can also cyclically permute these sequences. If all the premutations are also reduced we call this element cyclically reduced. Any element is conjugate to a cyclically reduced element. Here is a form of Collins lemma we will use (see also \cite{LS}, \cite{Short}). The statement is cumbersome but the proof is simple.

\begin {lem} \label{collins}
Let $u$,$v$ be cyclically reduced conjugate elements in some HNN-extension (given by $t^{-1}Ct = B$, where $C$,$B$ are subgroups of the base group). $u=g^{-1} v g$, where $g = a_1 t^{\sigma_1} a_2 t^{\sigma_2} ... a_n t^{\sigma_n}$. Then $|u|=|v|$. Furhermore, if $|u|=|v| = 0$, then there is a finite chain of words $v_1$,$u_1$...$v_n$, $u_n$, where $u_i, v_i \in C \cup B$ such that either $u_i = t^{-1}v_i t$ or $v_i = t^{-1}u_i t$ and $v$ is conjugate to $u_1$, $u$ is conjugate to $v_n$, $v_i$ is conjugate to $u_{i+1}$, where all conjugations are in the base group (in fact by $a_i$).

\end{lem}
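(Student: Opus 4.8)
The plan is to reduce everything to Britton's Lemma, which guarantees that a word of the HNN extension written in reduced form with at least one occurrence of $t^{\pm1}$ cannot represent an element of the base group; equivalently, $|w|$ is well defined, and the only way for a stable letter to disappear under reduction is through a \emph{pinch}, i.e. a subword $t^{-1}ct$ with $c\in C$ (which collapses into $B$) or $tbt^{-1}$ with $b\in B$ (which collapses into $C$). I would first record this statement, together with the elementary observation that conjugation by an element of the base group leaves $|w|$ unchanged, since it alters only the first and last base-group syllables of a reduced expression.

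For the equality $|u|=|v|$, I would rewrite the hypothesis as $vg=gu$ and induct on $n=|g|$, peeling one syllable $a_it^{\sigma_i}$ off $g$ at a time. The base case $n=0$ is exactly the observation above about base-group conjugation. In the inductive step I would look at the innermost part of
\[
g^{-1}vg \;=\; t^{-\sigma_n}a_n^{-1}\cdots t^{-\sigma_1}a_1^{-1}\,v\,a_1 t^{\sigma_1}\cdots a_n t^{\sigma_n},
\]
namely the block $t^{-\sigma_1}(a_1^{-1}va_1)t^{\sigma_1}$, and decide whether it is a pinch: because $v$ is cyclically reduced, its extreme syllables cannot themselves pinch, and this lets me control the reduction at the seam and identify the conjugate of $v$ by the shorter word $a_2t^{\sigma_2}\cdots a_nt^{\sigma_n}$, to which the induction hypothesis applies. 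The main obstacle is precisely this bookkeeping at the seam: I must verify that the intermediate word is again cyclically reduced (or track exactly how cyclic reduction interacts with the peeling) so that $|\cdot|$ is genuinely preserved at each stage and the hypotheses of the inductive step are restored. This is where all the care is needed and is the real content of Collins' Lemma.

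Finally, for the chain in the case $|u|=|v|=0$, both $u$ and $v$ lie in the base group, so in the fully reduced form of the displayed expression \emph{every} stable letter must cancel. Britton's Lemma then forces the cancellations to proceed from the inside out as a nested sequence of pinches. Reading these off gives the chain directly: setting $v_1:=a_1^{-1}va_1$, the innermost block $t^{-\sigma_1}v_1t^{\sigma_1}$ must be a pinch, which is exactly the requirement that $v_1\in C\cup B$ and that the collapsed element $u_1$ satisfy $u_1=t^{-1}v_1t$ or $v_1=t^{-1}u_1t$ according to the sign of $\sigma_1$; conjugating by $a_2$ produces $v_2$, and iterating over the remaining syllables yields the asserted chain $v_1,u_1,\dots,v_n,u_n$, in which consecutive members are related either by a base-group conjugation (by the relevant $a_i$) or by the $t$-transport between $C$ and $B$. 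Here there is no further obstacle beyond recording the pinches in the right order, since the work has already been done in showing the cancellation is a forced nested sequence of pinches.
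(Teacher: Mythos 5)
Your proposal is correct in outline but takes a genuinely different route from the paper. You argue algebraically: Britton's Lemma, induction on the number of stable letters of $g$, and an analysis of which pinches can occur at the seams where $v$ meets $a_1t^{\sigma_1}$ and $t^{-\sigma_1}a_1^{-1}$. The paper instead reads both conclusions off an annular van Kampen diagram for $u=g^{-1}vg$: a $t$-band cannot begin and end on the same boundary circle because $u$ and $v$ are cyclically reduced, so either every band joins the inner circle to the outer one (forcing $|u|=|v|$) or the bands form nested rings separating the two circles, which is exactly your nested sequence of pinches read from the inside out. The two arguments are shadows of one another --- a $t$-band is the diagrammatic record of a pinch --- but the diagrammatic version extracts both statements from the single observation about where bands may end, whereas your version must carry out the seam bookkeeping by hand.

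Two remarks on that inductive step, which you rightly identify as the real content but leave unexecuted. First, cyclic reducedness of $v$ alone does not suffice: you also need cyclic reducedness of $u$, to exclude the case where no pinch occurs at the innermost seam and the reduced form of $g^{-1}vg$ begins with $t^{-\sigma_n}$ and ends with $t^{\sigma_n}$. For instance, in $\langle x,y \mid x^y=x^2\rangle$ the element $yxy^{-1}$ is reduced of $t$-length $2$ and conjugate to $x$ of $t$-length $0$; only the failure of $yxy^{-1}$ to be cyclically reduced saves the statement. Your sketch invokes only the hypothesis on $v$. Second, when $|v|\geq 1$ the block $t^{-\sigma_1}(a_1^{-1}va_1)t^{\sigma_1}$ is never itself a pinch; the possible cancellations occur at its two ends against the extreme stable letters of $v$, and such a cancellation cyclically rotates $v$ (and conjugates by an element of $C$ or $B$) rather than shortening it, so what the induction actually preserves is the $t$-length of a cyclic conjugate. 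Neither point is fatal --- this is the standard Lyndon--Schupp argument and it does close up --- but as written the sketch stops precisely where the hypotheses are consumed, while the paper's (equally terse) diagrammatic proof absorbs both points into the statement that no band returns to the circle it started from.
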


\begin {proof}
Consider a van Campen's diagram for $u=g^{-1} v g$. Note that $t$-bands on the diagram can not start and end on the same $u$ or $v$ because they are cyclically reduced. Therefore there are two possibilities for the way the $t$-bands go on the diagram (Figure \ref{figcollins}) corresponding to $|u|=|v| = 0$ and $|u|=|v| \neq 0$. The second part of the statement follows from Figure \ref{figcollins} too.
\end {proof}

\begin{figure}[h]
\centering
\includegraphics[scale=0.5]{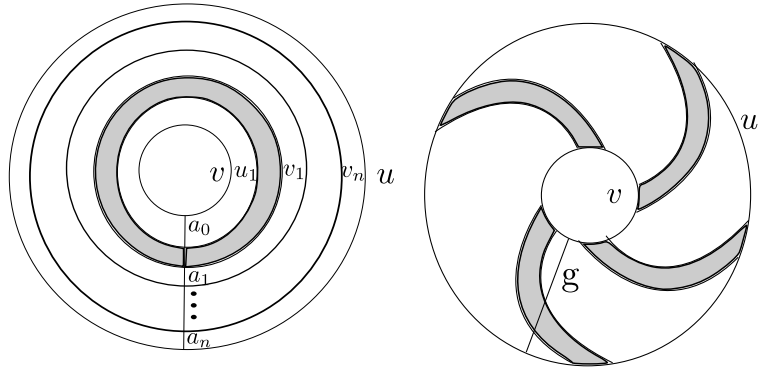}
\caption{Two possible Van Kampen diagrams for $u=g^{-1} v g$. The inside circle is $v$, the outside is $u$, the vertical line is $g$. $t$-bands are marked by grey.}
\label{figcollins}
\end{figure}

\section{Main Results} \label{results}

We prove several lemmas that we will apply in order to a homomorphism $G \to G$.

\begin {lem} \label{lem1}
Let $F: G \to G$ be a homomorphism, then there exists an inner automorphism $A$ such that $(A \circ F)(x)$ is in $<x,y>$.
\end{lem}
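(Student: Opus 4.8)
The plan is to track the image $F(x)$ under the homomorphism and show that it can be conjugated into the Baumslag-Solitar subgroup $H=\langle x,y\rangle$. First I would observe that $F(x)$ is some element $w\in G$, and since $G$ is an HNN extension of $H$ with stable letter $t$, the quantity $|F(x)|$ (the number of occurrences of $t^{\pm 1}$) is a natural invariant to control. The key structural fact I would exploit is that $x$ has very special conjugacy behaviour: in $H$, conjugation by $y$ doubles $x$ (i.e.\ $x^y=x^2$), so $x$ generates a highly non-trivial family of conjugate elements, and in particular $x$ has infinite order and sits inside the base group $H$. Because $F$ is a homomorphism, the relation $x^t=y$ and $x^y=x^2$ impose constraints: $F(x)^{F(t)}=F(y)$ and $F(x)^{F(y)}=F(x)^2$. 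The second relation says $F(x)$ is conjugate (in $G$) to its own square $F(x)^2$.

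The main engine will be Collins' Lemma (Lemma \ref{collins}). After cyclically reducing $F(x)$ to a cyclically reduced representative $u$, I would apply the lemma to the conjugacy $u\sim u^2$. Since $u$ and $u^2$ are conjugate, the lemma forces $|u|=|u^2|$; but the number of stable letters is additive up to cyclic reduction, so $|u^2|$ is essentially $2|u|$ unless cancellation occurs. The only way to reconcile $|u|=|u^2|$ is $|u|=0$, which places $u$—and hence, after conjugating by the cyclically reducing element, $F(x)$ itself—inside the base group $H=\langle x,y\rangle$ up to conjugacy. Concretely, I would let $A$ be the inner automorphism given by conjugation by whatever element $c$ achieves $c^{-1}F(x)c=u\in H$, so that $(A\circ F)(x)=u\in\langle x,y\rangle$, which is exactly the desired conclusion.

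The step I expect to be the main obstacle is making the additivity-of-syllable-length argument fully rigorous in the presence of possible cancellation, i.e.\ ruling out the degenerate scenario where $u$ has nonzero syllable length but $u^2$ cyclically reduces back to the same length. Here I would lean on the precise form of Collins' Lemma: in the case $|u|=|u^2|\neq 0$, the lemma gives a cyclic-permutation/base-group-conjugation matching between the $t$-syllables of $u$ and those of $u^2$, and I would derive a contradiction from the fact that the $t$-exponents of $u^2$ are just two copies of those of $u$ in sequence, so no cyclic permutation of $u^2$ can consistently match $u$ unless $|u|=0$. The remaining care is purely bookkeeping: verifying that conjugating $F(x)$ into $H$ is realized by an honest inner automorphism $A$ of $G$ (which it is, since conjugation by any $c\in G$ is inner), so that $(A\circ F)$ is again a homomorphism $G\to G$ with the required property.
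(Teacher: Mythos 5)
Your proposal is correct and follows the paper's argument essentially verbatim: choose the inner automorphism $A$ so that $u=(A\circ F)(x)$ is cyclically reduced, then apply Collins' Lemma (Lemma \ref{collins}) to the conjugacy $u\sim u^2$ coming from the relation $x^y=x^2$ to force $|u|=0$, i.e.\ $u\in\langle x,y\rangle$. The cancellation scenario you flag as the main obstacle in fact cannot occur: once $u$ is cyclically reduced, $u^2$ is automatically cyclically reduced with $|u^2|=2|u|$ exactly, which is precisely the observation the paper relies on.
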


\begin {proof}
We can choose $A$ such that $(A\circ F)(x)$ (and therefore $((A\circ F)(x))^2$) is cyclically reduced, then it follows from Lemma \ref{collins} applied to $(A\circ F)(x) =  g^{-1} ((A\circ F)(x))^2 g$ that $|(A\circ F)(x)|=0$.
\end {proof}

\begin {lem} \label{lem2}
Let $F: G \to G$ be a homomorphism, such that $F(x) \in <x,y>$, then there exists an inner automorphism $A$ such that $(A \circ F)(x) = x^i$.
\end{lem}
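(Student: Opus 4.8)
The plan is to encode the homomorphism hypothesis as a single conjugacy and then apply Lemma~\ref{collins} twice: once to the HNN structure of $G$ over $H$ (stable letter $t$) and once to the HNN structure of $H$ over $\langle x\rangle$ (stable letter $y$). Write $w := F(x)$; by hypothesis $w\in H$, so $w$ has $t$-length $0$. Applying $F$ to the defining relation $x^y=x^2$ gives $w^2 = F(x^2)=F(x^y)=F(y)^{-1}wF(y)$, so $w$ and $w^2$ are conjugate in $G$. Both are cyclically reduced elements of the base group $H$, since each has $t$-length $0$ and so has no stable letters to pinch; this is exactly the hypothesis needed to run Lemma~\ref{collins}.

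First I would apply Lemma~\ref{collins} to this conjugacy inside $G$, whose associated subgroups are $C=\langle x\rangle$ and $B=\langle y\rangle$ with $t^{-1}xt=y$. Taking $u=w^2$, $v=w$, $g=F(y)$, there are two outcomes. If the conjugating word genuinely involves $t$, the resulting chain forces $w$ to be conjugate \emph{in $H$} to some $u_1\in C\cup B=\langle x\rangle\cup\langle y\rangle$. Here I would use that powers of $y$ are already controlled: since $t^{-1}x^a t=y^a$, each $y^a$ is conjugate in $G$ to $x^a$. Hence, whether $u_1$ is a power of $x$ or a power of $y$, the element $w$ is conjugate in $G$ to a power of $x$, and we are done in this branch.

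The remaining case, when the conjugating word lies in $H$ so that $w$ and $w^2$ are already conjugate in $H$, is the one I expect to be the real obstacle, since the conjugacy coming from $F$ need not respect the $H$-structure a priori. To handle it I would pass to the HNN structure of $H$ over $\langle x\rangle$ with stable letter $y$: replace $w$ by a cyclically reduced $H$-conjugate $w'$ (a further inner automorphism of $G$), noting that $w'^2$ is then also cyclically reduced because the cyclic reducedness of $w'$ prevents any cancellation at the junction of $w'$ with itself. Now $w'$ and $w'^2$ are cyclically reduced and conjugate in $H$, so Lemma~\ref{collins} applied inside $H$ forces their $y$-lengths to agree, giving $|w'|_y=|w'^2|_y=2|w'|_y$ (writing $|\cdot|_y$ for the number of occurrences of $y^{\pm1}$), whence $|w'|_y=0$ and $w'\in\langle x\rangle$. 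In every case $w=F(x)$ is conjugate in $G$ to a power $x^i$; letting $A$ be conjugation by a witnessing element of $G$ yields the desired inner automorphism with $(A\circ F)(x)=x^i$.
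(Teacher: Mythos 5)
Your proof is correct and follows essentially the same route as the paper: the same case split on whether the conjugating element $F(y)$ involves the stable letter $t$, with Lemma~\ref{collins} applied to the $t$-HNN structure of $G$ in one case and to the $y$-HNN structure of the Baumslag--Solitar base in the other, concluding $|w'|_y = 2|w'|_y = 0$ there. Your extra step converting a possible power of $y$ into a power of $x$ by conjugating with $t$ carefully fills in a detail the paper glosses over when it simply asserts that $v_1$ or $u_1$ is $x^i$.
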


\begin {proof}
Consider $(F(y))^{-1}F(x)F(y)=(F(x))^2$ and apply Collins' lemma to it (if there are $t$ letters in $F(y)$ apply it to the second HNN extension (stable letter $t$), otherwise apply to the first one (stable letter $y$). In the latter case F(x) might not be cyclically $y$ reduced, but its conjugate is. In either case $v_1$ or $u_1$ will be $x^{i}$ (see Figure \ref{figlem2}). In the degenerate case of $F(y)=1$ we have $F(x) = 1 = x^{0}$.
\end {proof}

\begin{figure}[h]
\centering
\includegraphics[scale=0.6]{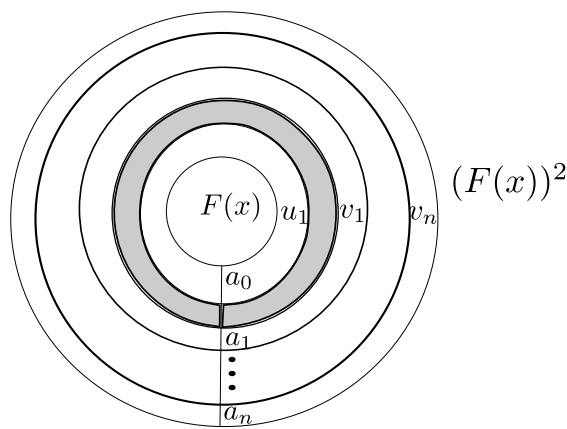}
\caption{A Van Kampen diagram for $(F(y))^{-1}F(x)F(y)=(F(x))^2$. The inside circle is $F(x)$, the outside is $(F(x))^2$, the vertical line is $F(y)$. $t$-bands are marked by grey. One of the edges of the $t$-band is $x^i$.}
\label{figlem2}
\end{figure}

\begin {lem} \label{lem3}
Let $F: G \to G$ be a homomorphism, such that $F(x) = x^i \neq 1$, then $F(y) \in <x,y>$.
\end{lem}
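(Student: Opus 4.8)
Set $w=F(y)$. Applying $F$ to the defining relation $x^{y}=x^{2}$ gives $w^{-1}x^{i}w=x^{2i}$, and since $x^{i}\neq 1$ the element $w$ conjugates the nontrivial element $x^{i}$ of the base group $H=\langle x,y\rangle$ onto another element $x^{2i}$ of $H$. I would view $G$ as the HNN extension of $H$ with stable letter $t$ and associated subgroups $C=\langle x\rangle$, $B=\langle y\rangle$, the isomorphism being $t^{-1}xt=y$. The one tool from $H$ that I would prepare in advance is the retraction $\phi\colon H\to\mathbb{Z}$ determined by $x\mapsto 0$, $y\mapsto 1$ (well defined because the relator $x^{y}x^{-2}$ has zero image); $\phi$ is invariant under conjugation, $\langle x\rangle\subseteq\ker\phi$, and $\langle y\rangle\cap\ker\phi=1$.

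Suppose, for contradiction, that $w\notin H$, and fix a reduced form $w=h_0 t^{\epsilon_1}h_1\cdots t^{\epsilon_n}h_n$ with $h_j\in H$ and $n\geq 1$. Then
\[
w^{-1}x^{i}w=h_n^{-1}t^{-\epsilon_n}\cdots t^{-\epsilon_1}\bigl(h_0^{-1}x^{i}h_0\bigr)t^{\epsilon_1}\cdots t^{\epsilon_n}h_n .
\]
Because $w$ is reduced, the only syllable juncture at which a Britton pinch can occur is the central one, so I would reduce this word to its length-$0$ value $x^{2i}$ by peeling $t$-letters from the inside out. At the first step the factor $h_0^{-1}x^{i}h_0$ must lie in $C$ (if $\epsilon_1=+1$) or in $B$ (if $\epsilon_1=-1$); since $\phi(h_0^{-1}x^{i}h_0)=\phi(x^{i})=0$ and this element is nontrivial, the possibility $\epsilon_1=-1$ is excluded by $\langle y\rangle\cap\ker\phi=1$. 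Hence $\epsilon_1=+1$, $h_0^{-1}x^{i}h_0=x^{j}$ with $j\neq 0$, and crossing the $t$ replaces the central factor by $t^{-1}x^{j}t=y^{j}$, which has $\phi$-value $j\neq 0$.

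Now I would iterate. If $n=1$ the reduction is already finished and $h_1^{-1}y^{j}h_1=x^{2i}$, which is impossible because $\phi$ of the left side is $j\neq 0$. If $n\geq 2$ the next pinch forces $h_1^{-1}y^{j}h_1\in C$ or $\in B$; the case $\epsilon_2=+1$ is killed by $\phi$ as before, so $\epsilon_2=-1$ and $h_1^{-1}y^{j}h_1\in B=\langle y\rangle$. The crux of the whole argument is the observation that $\langle y\rangle$ is malnormal in $H$ — equivalently, the centralizer of any nontrivial power of $y$ is exactly $\langle y\rangle$ — which I would verify by a one-line computation in $H\cong\mathbb{Z}[1/2]\rtimes\mathbb{Z}$. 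This forces $h_1\in\langle y\rangle=B$, so the subword $t^{\epsilon_1}h_1 t^{\epsilon_2}=t\,h_1\,t^{-1}$ of $w$ is itself a pinch, contradicting the assumption that $w$ was reduced. Every case yields a contradiction, so $n=0$ and $w=h_0\in H$, as claimed. I expect the only genuinely non-formal point to be this malnormality of $\langle y\rangle$; everything else is bookkeeping with Britton's lemma, and the same conclusion can alternatively be read off from the chain produced by Collins' Lemma applied to $w^{-1}x^{i}w=x^{2i}$.
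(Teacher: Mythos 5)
Your argument is correct and is essentially the paper's proof in different clothing: peeling the $t$-letters of $F(y)$ from the inside out via Britton's lemma is exactly the chain that Collins' Lemma produces in the paper, and both proofs hinge on the same two facts — a nontrivial power of $x$ is not conjugate in $H$ to a nontrivial power of $y$ (your $\phi$), and $\langle y\rangle$ is malnormal in $H$, which forces the cancellation contradicting reducedness. The only presentational difference is that you make the exponent-sum homomorphism and the malnormality claim explicit where the paper asserts them informally.
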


\begin {proof}
Consider $(F(y))^{-1}x^iF(y)=x^{2i}$ and apply Collins' lemma to it. The outside of the first $t$-ring is $y^j$ and it has to be Baumslag-Solitar conjugate to the inside of the second $t$-ring (see Figure \ref{figlem3}), which is either $x^n$ or $y^m$. The first case is impossible, and the second case is only possible if $y^j$ is conjugated by $y^k$ (in the Baumslag-Solitar group anything else conjugates $y^j$ outside of $<y>$). Then the first and the second $t$-rings can be reduced. There could not be only one $t$-ring because the outside of the diagram is $x^{2i}$. Therefore $F(y)$ has no letters $t$ (in reduced form).

\end {proof}

\begin{figure}[h]
\centering
\includegraphics[scale=0.5]{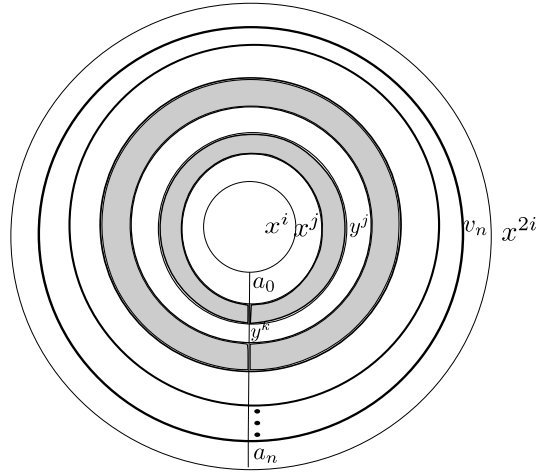}
\caption{A Van Kampen diagram for $(F(y))^{-1}F(x)F(y)=(F(x))^2$. The inside circle is $F(x)$, the outside is $(F(x))^2$, the vertical line is $F(y)$. $t$-bands are marked by grey. One of the edges of the $t$-band is $x^i$.}
\label{figlem3}
\end{figure}

\begin {lem} \label{lem4}
Let $F: G \to G$ be a homomorphism, such that $F(x) = x^i \neq 1$ and $F(y) \in <x,y>$. Then $F(t)$ has one letter $t$, and there is an inner automorphism $A$ such that $(A \circ F)(x)=x$ and $(A \circ F)(y)=y$.
\end{lem}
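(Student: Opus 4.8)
The two relations of $G$ give, for any homomorphism $F$, the identities $F(y)^{-1}F(x)F(y) = F(x)^2$ and $F(t)^{-1}F(x)F(t) = F(y)$, and the plan is to extract everything from these two equations under the hypotheses $F(x)=x^i\neq 1$ and $F(y)\in H:=\langle x,y\rangle$. First I would analyze the first identity, which reads $F(y)^{-1}x^iF(y)=x^{2i}$ inside the Baumslag--Solitar group $H\cong\mathbb{Z}[1/2]\rtimes\mathbb{Z}$. Since $y^{-1}x^iy=x^{2i}$ already, the element $F(y)y^{-1}$ centralizes $x^i$; a short computation shows that $C_H(x^i)$ equals the normal closure $N$ of $x$ (the copy of $\mathbb{Z}[1/2]$), so that $F(y)=cy$ for some $c\in N$. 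In particular $F(y)$ is $H$-conjugate to $y$, by an element of $N$.

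The heart of the argument is the second identity $F(t)^{-1}x^iF(t)=cy$, which exhibits two elements of $H$ that are conjugate in $G$ but, comparing images in $H^{ab}\cong\mathbb{Z}$ (where $x\mapsto 0$, $y\mapsto 1$), not conjugate in $H$; hence $F(t)$ contains at least one $t$. To bound the number of $t$'s from above I would write $F(t)=g_0t^{\epsilon_1}g_1\cdots t^{\epsilon_m}g_m$ in reduced form and peel the conjugation inward one $t$-band at a time, exactly as in Lemma \ref{collins}. For the conjugate of $x^i$ to return to $H$, the innermost conjugate $g_0^{-1}x^ig_0$ must lie in the edge group matched to its band; the $H^{ab}$-obstruction (a nonzero power of $y$ is never conjugate into $\langle x\rangle$, nor a nonzero power of $x$ into $\langle y\rangle$) forces that band to be of type $t^{-1}(\cdot)t$, converting $g_0^{-1}x^ig_0=x^{i'}\in\langle x\rangle$ into $y^{i'}$. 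The same obstruction, together with the fact that the only elements conjugating the nontrivial power $y^{i'}$ back into $\langle y\rangle$ lie in $C_H(y^{i'})=\langle y\rangle$, shows that a second band would force a pinch with $g_1\in\langle y\rangle$, contradicting that $F(t)$ is reduced. Therefore $m=1$ and $F(t)$ has exactly one $t$. I expect this reduced-form bookkeeping---ruling out $m\ge 2$ without collapsing the word---to be the main obstacle, and the nested $t$-rings of Figure \ref{figcollins} are its geometric shadow.

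With a single $t$ the identity becomes $cy=g_1^{-1}y^{i'}g_1$, so $y^{i'}$ is $H$-conjugate to $cy$ and hence to $y$; comparing in $H^{ab}$ gives $i'=1$, and unwinding $g_0^{-1}x^ig_0=x^{i'}=x$ shows $x^i$ is $H$-conjugate to $x$. In $H$ this happens exactly when $i=2^k$ for some $k\ge 0$, the conjugation being by a power of $y$. Finally I would produce the required inner automorphism explicitly: conjugation by $y^{-k}$ already sends $x^{2^k}$ to $x$, and adjusting the conjugator within the coset $y^{-k}N$ (which does not disturb the action on $x$) lets me simultaneously solve $p^{-1}(cy)p=y$, since that equation is linear over $N\cong\mathbb{Z}[1/2]$ and $\mathbb{Z}[1/2]$ is $2$-divisible. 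Taking $A$ to be conjugation by this $p$ gives $(A\circ F)(x)=x$ and $(A\circ F)(y)=y$, completing the proof.
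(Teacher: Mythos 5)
Your proof is correct and follows essentially the same route as the paper's: pin down $F(y)\in yN$ (the paper's $y^{n+1}x^jy^{-n}$) from the Baumslag--Solitar relation, use the $y$-exponent sum in $H^{ab}$ to force at least one $t$ in $F(t)$, rule out a second $t$-band because any conjugator returning $y^{i'}$ to $\langle y\rangle$ must lie in $C_H(y^{i'})=\langle y\rangle$ and would create a pinch in the reduced form of $F(t)$, then read off $g_0^{-1}x^ig_0=x$, $g_1^{-1}yg_1=F(y)$ and assemble the inner automorphism. The only differences are presentational: you peel $t$-bands via Britton's lemma applied directly to $F(t)^{-1}x^iF(t)F(y)^{-1}=1$ where the paper invokes the annular Collins'-lemma picture of Lemma \ref{lem3}, and you solve the final conjugation equation linearly in $N\cong\mathbf{Z}[1/2]$ where the paper exhibits the explicit conjugator $y^nx^jy^{-n}$.
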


\begin {proof}
$F(y) = y^{n+1}x^{j}y^{-n}$ for $n>0$ because of the equality $(F(y))^{-1}x^iF(y)=x^{2i}$. If $F(t)$ had no $t$ letters we would have that $x^i$ is conjugate to $y^{n+1}x^{j}y^{-n}$ in Baumslag-Solitar, which is impossible: the sum of exponents of $y$ can not be changed by conjugation. As in the proof of Lemma \ref{lem3} $F(t)$ can have at most one letter $t$ (in reduced form). Let $F(t) = g_1 t g_2$, where $g_1,g_2$ are in the Bauslag-Solitar. Again by Collins' lemma ${g_1}^{-1} x^i g_1 = x^m$ and ${g_2}^{-1} y^m g_2 = F(y)$. The left hand side of the last equation has sum of exponents of $y$ equal to $1$, therefore $m=1$, which implies there exists an inner automorphism $A'$ such that $(A' \circ F)(x)=x^m = x$. Let us redefine $n,j$ such that $(A' \circ F) (y)  = y^{n+1}x^{j}y^{-n}$.

 Let $A''$ be the conjugation by $y^n x^j y ^ {-n}$, then $(y^{n} x^{-j} y ^ {-n}) x (y^n x^j y ^ {-n}) = x$ and \\
$(y^{n} x^{-j} y ^ {-n}) (y^{n+1}x^{j}y^{-n}) (y^n x^j y ^ {-n}) = (y^{n} x^{-j}) y x^{j} ( x^j y ^ {-n}) = y^{n} y y ^ {-n} = y$. We get the conclusion of the lemma by setting $A = A'' \circ A'$.
\end {proof}

\begin {prop} \label{homo}
Let $F: G \to G$ be a homomorphism. If $F(x)=1$, then $F(y)=1$ and $F(t)$ can be any element. Otherwise, $F$ is an automorphism, and there exists an inner automorphism $A$ such that $(A \circ F)(x) = x$, $(A \circ F)(y) = y$, and $(A \circ F)(t) = gt$, where $g \in C_H(x)$.

\end {prop}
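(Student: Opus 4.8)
The plan is to treat the two cases of the statement separately and to lean on Lemmas \ref{lem1}--\ref{lem4} for the bulk of the work. For the degenerate case $F(x)=1$, I would simply apply $F$ to the defining relation $x^t=y$, which reads $F(t)^{-1}F(x)F(t)=F(y)$; since $F(x)=1$ this forces $F(y)=1$. Conversely, with $F(x)=F(y)=1$ the relation $x^y=x^2$ becomes $1=1$ and $x^t=y$ becomes $F(t)^{-1}\cdot 1\cdot F(t)=1$, so both relations hold for \emph{any} assignment of $F(t)$; hence $F(t)$ is unconstrained. This case is immediate.

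For the main case $F(x)\neq 1$, I would compose the inner automorphisms supplied by the lemmas. Lemma \ref{lem1} gives inner $A_1$ with $(A_1\circ F)(x)\in\langle x,y\rangle$; Lemma \ref{lem2} gives inner $A_2$ with $(A_2\circ A_1\circ F)(x)=x^i$, and since $F(x)\neq 1$ and $A_2\circ A_1$ is bijective, $x^i\neq 1$. Writing $F_1=A_2\circ A_1\circ F$, Lemma \ref{lem3} gives $F_1(y)\in\langle x,y\rangle$, so Lemma \ref{lem4} applies and yields an inner $A_3$ (conjugation by an element of $H$) with $(A_3\circ F_1)(x)=x$ and $(A_3\circ F_1)(y)=y$; moreover $F_1(t)$ has exactly one letter $t$, and since conjugating by an element of $H$ does not change the $t$-pattern, so does $F_2(t)$ for $F_2:=A_3\circ F_1$. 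Setting $A=A_3\circ A_2\circ A_1$, the map $F_2=A\circ F$ fixes $x$ and $y$.

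The substantive step is to determine $F_2(t)$. I would write $F_2(t)=g_1 t g_2$ with $g_1,g_2\in H$ (the single $t$ occurs positively, as in Lemma \ref{lem4}) and apply $F_2$ to $x^t=y$, obtaining $g_2^{-1}t^{-1}(g_1^{-1}xg_1)t\,g_2=y$. Since the right side lies in $H$, the reduced-form/Collins'-lemma reasoning forces $g_1^{-1}xg_1$ into the associated subgroup $\langle x\rangle$, say $g_1^{-1}xg_1=x^k$; then $t^{-1}x^k t=y^k$ and $g_2^{-1}y^k g_2=y$. Comparing the exponent sum of $y$ (a homomorphism $H\to\mathbb{Z}$ that kills $x$) gives $k=1$, so $g_1\in C_H(x)$ and $g_2\in C_H(y)$. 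A short computation in $H=BS(1,2)$ shows $C_H(y)=\langle y\rangle$, so $g_2=y^s$; finally the HNN identity $t y^s=x^s t$ absorbs the right factor, giving $F_2(t)=g_1 x^s t=gt$ with $g=g_1 x^s\in C_H(x)$. I expect this reduction to be the main obstacle: justifying that $g_1^{-1}xg_1$ must land in $\langle x\rangle$ and running the exponent-sum bookkeeping cleanly is exactly where care is needed.

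It remains to check that $F$ is an automorphism. For $g\in C_H(x)$ the assignment $x\mapsto x,\ y\mapsto y,\ t\mapsto gt$ respects both relations, the condition $g\in C_H(x)$ being precisely what makes $(gt)^{-1}x(gt)=t^{-1}xt=y$; call this endomorphism $\phi_g$. Since $\phi_g$ fixes $H$ pointwise, $\phi_g(\phi_{g^{-1}}(t))=\phi_g(g^{-1}t)=g^{-1}(gt)=t$ and symmetrically, so $\phi_g$ and $\phi_{g^{-1}}$ are mutually inverse and each $\phi_g$ is an automorphism. As $F_2=\phi_g$, the original $F=A^{-1}\circ F_2$ is a composite of an inner automorphism and $\phi_g$, hence an automorphism, which completes the proof.
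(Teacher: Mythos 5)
Your proof is correct and follows essentially the same route as the paper: compose the inner automorphisms supplied by Lemmas \ref{lem1}--\ref{lem4}, pin down $(A\circ F)(t)=g_1tg_2$ via Britton's/Collins' lemma together with the $y$-exponent-sum argument to get $g_1\in C_H(x)$ and $g_2=y^s$, and absorb the right factor using $ty^s=x^st$. Your explicit handling of the degenerate case $F(x)=1$ and the verification that $\phi_g$ and $\phi_{g^{-1}}$ are mutually inverse are details the paper leaves implicit, but they do not constitute a different approach.
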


\begin {proof}
If $F(x) \neq 1$ we apply Lemmas \ref{lem1}, \ref{lem2}, \ref{lem3}, \ref{lem4} in succession to obtain $A$, such that $(A \circ F)(x) = x$, $(A \circ F)(y) = y$, and $(A \circ F)(t) = g_1 t g_2$, where $g_1, g_2$ are in Baumslag-Solitar. Since ${g_2}^{-1} y^k g_2 = y$, we have $g_2 = y^m$ and $k=1$. Therefore ${g_1}^{-1} x^{k} g_1 = {g_1}^{-1} x g_1 = x$, i.e. $g_1 \in C_H(x)$. Since $ g_1 t g_2 =  g_1 t y^m = g_1 x^m t$, we set $g=g_1 x^m \in C_H(x)$.

\end {proof}

\begin {rem} \label{hopf}

It follows that $G$ is both Hopfian and co-Hopfian. Now we can describe $Out(G)$.
\end {rem}

\begin {prop} \label{centre}
$Out(G)$ is isomorphic to $C_H(x)$, the centralizer of $x$ in the Baumslag-Solitar group. The isomorphism $\phi : C_H(x) \to Out(G)$ is defined by $\phi(g) = [F]$, where $F : x \mapsto x$, $y \mapsto y$, $t \mapsto gt$ is an automorphism.
\end {prop}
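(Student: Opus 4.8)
The plan is to exhibit $\phi$ as a well-defined surjective homomorphism and then to prove it is injective; the last of these is where the real work lies.

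First I would check that $\phi$ is well defined on all of $C_H(x)$. Given $g\in C_H(x)$, the assignment $x\mapsto x$, $y\mapsto y$, $t\mapsto gt$ respects the defining relations: the relation $x^y=x^2$ is untouched, while $x^{gt}=(gt)^{-1}x(gt)=t^{-1}(g^{-1}xg)t=t^{-1}xt=y$ precisely because $g$ centralizes $x$. Hence $F=F_g$ is an endomorphism, and since $F(x)=x\neq 1$, Proposition \ref{homo} upgrades it to an automorphism; so $[F_g]\in Out(G)$ is a legitimate definition of $\phi(g)$.

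Next, surjectivity is essentially a restatement of Proposition \ref{homo}: any automorphism $F$ has $F(x)\neq 1$, so there is an inner automorphism $A$ with $(A\circ F)(x)=x$, $(A\circ F)(y)=y$, and $(A\circ F)(t)=gt$ for some $g\in C_H(x)$; that is, $A\circ F=F_g$, whence $[F]=[F_g]=\phi(g)$. For the homomorphism property I would compute the composite directly: since $g_2\in C_H(x)\subseteq\langle x,y\rangle$ is fixed by $F_{g_1}$, we get $(F_{g_1}\circ F_{g_2})(t)=F_{g_1}(g_2 t)=g_2 g_1 t$, and because $C_H(x)$ is abelian (it is the normal $\mathbb{Z}[1/2]$ inside the Baumslag--Solitar group) this equals $g_1 g_2\,t$. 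Thus $F_{g_1}\circ F_{g_2}=F_{g_1g_2}$ and $\phi(g_1g_2)=\phi(g_1)\phi(g_2)$.

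The heart of the argument is injectivity, and this is where I expect the main obstacle. Suppose $\phi(g)=1$, so $F_g$ is conjugation by some $h\in G$. Then $F_g(x)=x$ and $F_g(y)=y$ say that $h$ commutes with $x$ and with $y$, hence with the whole base group $H=\langle x,y\rangle$; thus $h\in C_G(H)$. I claim $C_G(H)\subseteq H$, and I would settle this with Britton's lemma (the normal-form engine behind Collins' Lemma \ref{collins}): writing a putative $h\notin H$ in reduced form $h=h_0t^{\sigma_1}\cdots t^{\sigma_n}h_n$ with $n\ge 1$, the relation $hwh^{-1}=w$ for every $w\in H$ would force the innermost subword $t^{\sigma_n}(h_n w h_n^{-1})t^{-\sigma_n}$ to pinch for all $w$, i.e. the whole of $h_n H h_n^{-1}=H$ would have to lie inside one of the proper associated subgroups $\langle x\rangle$ or $\langle y\rangle$, which is impossible. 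Hence $h\in H$, and then $h\in C_H(x)\cap C_H(y)=Z(H)=\{1\}$, since the centralizer of $x$ is $\mathbb{Z}[1/2]$ and that of $y$ is $\langle y\rangle$, meeting only in the identity. So $h=1$, $F_g$ is the identity automorphism, and the equation $gt=F_g(t)=t$ forces $g=1$. Combining the four steps shows $\phi$ is an isomorphism.
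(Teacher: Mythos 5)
Your proof is correct and follows essentially the same route as the paper: surjectivity from Proposition \ref{homo}, the homomorphism property by direct computation, and injectivity from the triviality of the centralizer of the base group $H=\langle x,y\rangle$. In fact you supply a detail that the paper's one-line injectivity argument (``$C_H(\langle x,y\rangle)=1$'') glosses over, namely the Britton's-lemma step showing that no element of $G$ outside $H$ can centralize $H$, so that the question really does reduce to the (trivial) center of $H$.
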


\begin {proof}
This map is one-to-one because $C_H(<x,y>) = {1}$. This map is onto because of Proposition \ref{homo}. The multiplication is clearly preserved.
\end {proof}

\begin {rem} \label{last}
$C_H(x) = \{ y^{n}x^{j}y^{-n} \}$, all elements of $H$ that have total power of $y$ being $0$. This subgroup is generated by $y^{i} x y^{-i}$, $i>0$. We can construct a map $\psi : C_H(x) \to \mathbf{Q}$, defined by  $y^{i} x y^{-i} \mapsto \frac{1}{2^{i}}$. This map is an injective homomorphism with respect to addition: $\psi(y^{n}x^{j}y^{-n}) = \frac{j}{2^{n}}$. The image of this map is not finitely generated, because all of its finitely generated subgroups are cyclic, but the image itself is not cyclic.
\end {rem}

The main theorem, Theorem \ref{main}, follows from Proposition \ref{centre} and Remark \ref{last}.

\newpage

\end{document}